\def\BBox{\kern  -0.2cm\hbox{\vrule width 0.2cm height 0.2cm}}
\newtheorem{lemma}{Lemma}[section]
\newtheorem{theorem}{Theorem}[section]
\newtheorem{definition}{Definition}[section]
\newtheorem{corollary}{Corollary}[section]
\newtheorem{remark}{Remark}[section]
\newtheorem{conjecture}[theorem]{Conjecture}
\newtheorem{construction}[theorem]{Construction}
\title{A Family of Dense Mixed Graphs of Diameter $2$}
\author{G. Araujo-Pardo$^{1}$, C. Balbuena$^{2}$,
M. Miller$^{3,4}$, M. \v{Z}d\'imalov\'a$^5$
\thanks{ \footnotesize{\em Email addresses:} ~ garaujo@matem.unam.mx (G. Araujo),
~ m.camino.balbuena@upc.edu (C. Balbuena), \, \, ~ Mirka.Miller@newcastle.edu.au (M. Miller),\, \, ~ zdimalova@math.sk (M. \v{Z}d\'imalov\'a)}
 \\[2ex]
\\$^1${\footnotesize Instituto de Matem\'{a}ticas, Universidad Nacional Aut\'{o}noma de M\'{e}xico,} \\
{\footnotesize M\'{e}xico D. F., M\'exico }\\
$^2${\footnotesize Departament de Matem\`atica Aplicada III, Universitat
Polit\`ecnica de Catalunya, }\\
{\footnotesize Campus Nord, Edifici C2, C/ Jordi Girona 1 i 3 E-08034 Barcelona,
Spain.} \\
$^3${\footnotesize School of Mathematical and Physical Sciences, University of Newcastle, Australia.}\\
$^4${\footnotesize Department of Mathematics, University of West Bohemia, Pilsen, Czech Republic.}\\
$^5${\footnotesize Department of Mathematics and Descriptive Geometry,}\\
{\footnotesize Slovak University of Technology in Bratislava, Slovakia. }
}
\begin{document}
\maketitle

\begin{abstract}
A mixed graph is said to be {\it{dense}} if its order is close to the Moore bound and it is  {\it{optimal}} if there is not a mixed graph with the same parameters and bigger order.

We present a construction that provides dense mixed graphs of undirected degree $q$, directed degree $\frac{q-1}{2}$ and order $2q^2$, for $q$ being an odd prime power. Since the  Moore bound  for a mixed graph with these parameters is equal to $\frac{9q^2-4q+3}{4}$ the defect of these mixed graphs is $({\frac{q-2}{2}})^2-\frac{1}{4}$.

In particular we obtain a known mixed Moore graph of order $18$, undirected degree $3$ and directed degree $1$  called Bos\'ak's graph and a new mixed graph of order $50$, undirected degree $5$ and directed degree $2$, which is proved to be optimal.
\end{abstract}

{\bf Key words.} Mixed Moore graphs, projective planes.

\section {Introduction}

In this paper we consider graphs which are finite and mixed, i.e., they may contain (directed edges) arcs as well as undirected ones. The mixed  graphs are also called {\it{partially directed graphs}}.  Bos\'ak  \cite{B79} investigated those mixed graphs with given degree and given diameter having maximum number of vertices which are called  Mixed Moore graphs. In some sense, Bos\'ak generalized the concepts of Moore graph and Moore digraph by allowing the existence of both edges and arcs simultaneously. These  graphs and digraphs have been very used to model different kind of networks (such as a telecommunications, multiprocessor, or local area network, to name just a few). In many real-world networks a mixture of both unidirectional and bidirectional connections may exist (e.g. the World Wide Web network, where pages are nodes and hyperlinks describe the connections). For such networks, mixed graphs provide a perfect modeling framework \cite{MS13}.

\noindent Undirected graphs (mixed Moore graphs admitting only edges) with maximum degree $d$ and diameter $k$ are graphs of order $M_{d,k}=1+d+d(d-1)+\cdots+d(d-1)^{k-1}$ (undirected Moore bound). There are no Moore graphs of degree $d\geq 3$ and diameter $k \geq 3$, see \cite{BI73, D73, HS60}.
For $k=1$ and $d\geq 1$ complete graphs $K_{d+1}$ are the only Moore graphs. For $k\geq 3$ and $d=2$ the cycles $C_{2k+1}$ are the only Moore graphs. For $k=2$, apart from $C_5$ $(d=2)$, Moore graphs exist only when $d=3$  (Petersen graph), $d=7$ (the Hoffman-Singleton graph) and possibly $d=57$.
\noindent For more details and results concerning Moore graphs see the survey by Miller and \v{S}ir\'a\v{n} \cite{MS13}.

\noindent Directed Moore graphs (mixed Moore graphs admitting only arcs) with maximum out-degree $d$ and diameter $k$ are digraphs of order $M^*_{d,k}=1+d+d^2+\cdots+d^k$ (directed Moore bound). In \cite{BT80, PZ74} it was proved that the Moore digraphs do not exist for $d>1$ and $k>1$. The unique Moore digraphs are directed cycles of length $k+1$, denoted by  $\vec{C}_{k+1}$ and complete graphs on $d+1$ vertices.

\vskip3mm

 Directed and undirected Moore graphs are special cases of mixed Moore graphs where the graphs admit only arcs or only edges, respectively \cite{MM08}. A mixed graph is said to be {\it{proper}} if it contains at least one arc and at least one edge. In particular,
a {\it{mixed regular graph}} is a simple and finite graph $G$ where each vertex $v$ of $G$ is incident with $z$ arcs from it and $r$ edges; $z$ is the directed degree and $r$ is the undirected degree of $v$ and we set $d=r+z$, $d$ being the degree of $v$. If $G$ has diameter equal to $k$, we say that $G$ is a $(z,r;k )$-mixed graph of directed degree $z$, undirected degree $r$ and diameter $k$.

\noindent Let $M_{z,r,k}$ denote the upper bound on the order of a $(z,r;k)$-mixed graph. A mixed graph that attains this bound is called a $(z,r;k)$-mixed Moore Graph of diameter $k$. 
Note that $M_{z,r,k}=M_{d,k}$ when $z=0$ and $M_{z,r,k}=M^*_{d,k}$ when $r=0\  (d=z+r)$.

The following conjecture was proposed by Bos\'ak in 1979 (see \cite{B79}) and proved in 2007 by Nguyen, Miller and Gimbert (see \cite{MMG07}).


\begin{conjecture}\label{BC}\cite{B79}.
\noindent Let  $d \geq 1, k \geq 3$ be two integers. A finite graph $G$ is a mixed Moore graph of degree $d$ and diameter $k$ if and only if either $d=1$ and $G$ is $\vec{C}_{k+1}$, or $d=2$ and $G$ is $C_{k+1}$.
\end{conjecture}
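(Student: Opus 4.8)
The plan is to prove both implications, with essentially all of the work in the ``only if'' direction. The ``if'' direction is a direct verification that $\vec{C}_{k+1}$ (for $d=1$) and the cycle $C_{k+1}$ (for $d=2$) attain their Moore bounds. For the converse I would split into three cases according to the undirected degree $r$ and the directed degree $z$ (with $d=r+z$): the purely directed case $r=0$, the purely undirected case $z=0$, and the proper mixed case $r,z\geq 1$. The purely directed case is settled by the nonexistence of Moore digraphs for out-degree $>1$ and diameter $>1$ (\cite{BT80,PZ74}), leaving only $\vec{C}_{k+1}$; the purely undirected case is settled by the classical Moore-graph theorems (\cite{BI73,D73,HS60}), leaving only the cycle. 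The whole difficulty therefore concentrates on ruling out a proper mixed Moore graph of diameter $k\geq 3$, and this is what I would attack spectrally.

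For the spectral analysis I would first establish a single polynomial identity for the adjacency matrix $A$. Writing $A=E+F$ with $E$ the (symmetric) edge part and $F$ the arc part, the Moore property says that for each ordered pair $u\neq v$ there is a unique path of length at most $k$ from $u$ to $v$. Passing from paths to walks, the only discrepancy comes from backtracks $u\to a\to u$ along an edge, since an arc has no reverse; together with the girth constraints inherent in a diameter-$k$ Moore graph, this shows that the correction terms are polynomials in $A$ whose coefficients involve $r$ but \emph{not} $z$. Carrying this out for small diameter gives $A^{2}+A-(r-1)I=J$ when $k=2$ and $A^{3}+A^{2}+(1-2r)A-(r-1)I=J$ when $k=3$, and in general a degree-$k$ identity
\[
\Phi_{k}(A)=J ,
\]
where $\Phi_{k}$ is a monic polynomial with integer coefficients depending only on $r$ and $k$. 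Since $\mathbf{1}$ is a common eigenvector of $A$ (eigenvalue $d$) and of $J$ (eigenvalue $n$), and every other eigenvector is annihilated by $J$, this forces $n=\Phi_{k}(d)$, while every non-principal eigenvalue $\lambda$ of $A$ is a root of the $z$-independent polynomial $\Phi_{k}$.

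The key structural consequence is that the non-principal spectrum of $A$ is confined to the $k$ roots $\lambda_{1},\dots,\lambda_{k}$ of $\Phi_{k}$, with multiplicities $m_{1},\dots,m_{k}\geq 0$, exactly as if $A$ were the adjacency matrix of an undirected Moore graph of degree $r$ --- even though here $d=r+z>r$. I would then impose the integrality constraints that drive all Moore-type nonexistence proofs: $\sum_{j}m_{j}=n-1$; the trace relation $\mathrm{tr}(A)=0$, giving $d+\sum_{j}m_{j}\lambda_{j}=0$; the second trace relation $\mathrm{tr}(A^{2})=nr$ (twice the number of edges), giving $d^{2}+\sum_{j}m_{j}\lambda_{j}^{2}=nr$; and, because the characteristic polynomial lies in $\mathbb{Z}[x]$, the requirement that Galois-conjugate roots of $\Phi_{k}$ receive equal multiplicity. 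Combined with $n=\Phi_{k}(d)$ and $d=r+z$, these become a rigid Diophantine system in $r$, $z$ and the $m_{j}$.

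The main obstacle, and the heart of the proof, is showing that for every $k\geq 3$ and all $r\geq 1$, $z\geq 1$ this system has no solution in non-negative integers. I expect to argue this through the irreducible factorization of $\Phi_{k}$ over $\mathbb{Q}$: an irreducible factor of degree $\geq 2$ forces its conjugate (irrational) roots to share one common multiplicity, and substituting the resulting symmetric expressions into the two trace relations pins that multiplicity down in terms of $r$ and $k$ alone; the residual contribution $z=d-r$ in $n=\Phi_{k}(d)$ then cannot be absorbed while keeping all $m_{j}$ non-negative integers. This is precisely the step where the classical integrality arguments for undirected (\cite{HS60,D73,BI73}) and directed (\cite{BT80,PZ74}) Moore graphs must be genuinely extended, since the simultaneous presence of $r$ and $z$ decouples the spectrum from the order. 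Once this infeasibility is established, no proper mixed Moore graph of diameter $k\geq 3$ exists, and together with the two pure cases this isolates exactly $\vec{C}_{k+1}$ and $C_{k+1}$, proving the conjecture.
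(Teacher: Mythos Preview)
The paper does not contain a proof of this statement. Conjecture~\ref{BC} is quoted from Bos\'ak~\cite{B79} and the paper immediately remarks that it was \emph{proved} by Nguyen, Miller and Gimbert in~\cite{MMG07}; no argument is reproduced here. There is therefore nothing in the present paper to compare your proposal against.

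That said, your outline is in the spirit of the proof in~\cite{MMG07}: one does derive a matrix identity $\Phi_k(A)=J$ whose left side depends only on $r$ (Bos\'ak already had $A^2+A-(r-1)I=J$ for $k=2$), restricts the non-principal spectrum to the roots of $\Phi_k$, and then forces a contradiction from trace/integrality constraints when $k\ge 3$ and $r,z\ge 1$. Two points in your sketch would need real work before it counts as a proof. First, $A$ is not symmetric, so you cannot take diagonalizability, real eigenvalues, or equal algebraic and geometric multiplicities for granted; you must argue (e.g.\ via the minimal polynomial implied by $\Phi_k(A)=J$) that the spectrum behaves well enough for the trace identities to be usable. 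Second, the crucial step---showing the Diophantine system in the $m_j$ is infeasible for every $k\ge 3$---is only asserted (``I expect to argue\dots''); this is precisely the content of~\cite{MMG07}, and your write-up gives no mechanism that actually rules out solutions uniformly in $k$. As written, the proposal is a plausible plan rather than a proof.
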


\noindent Hence, mixed Moore graphs  different from cycles have diameter $k=2$ and their order attains the following upper bound:
\begin{equation}\label{upper} M_{z,r,2}= 1+r+z+r(r-1+z)+z(r+z)= (r+z)^2+z+1.\end{equation}

\noindent  Bos\'ak in \cite{B79} gives for $k=2$ divisibility conditions for the existence of Mixed Moore graphs related with the distribution of undirected and directed edges. He proved that the two parameters $z$ and $r$ must satisfy a tight arithmetic condition. Accordingly, apart from the trivial cases when  $r = 0$ and  $z = 1$   (graph $\vec{C}_3$),  $r = 2$ and $z = 0$   (graph $C_5$), there must exists a positive odd integer $c$ such that
\begin{equation}\label{c} c | (4z - 3)(4z + 5) \mbox{ and } r = \frac{1}{4} (c^2 + 3). \end{equation}
In the same paper Bos\'ak provides constructions of some of these mixed Moore graphs with all of them being, except the Bos\'ak graph of order $n=18$, isomorphic to Kautz digraphs $K_a(d,2)$ with all digons (cycles of length 2) considered as undirected edges (see \cite{K68, K69}).

In 2007 Nguyen, Miller and Gimbert (see \cite{MMG07}) proved that all mixed Moore graphs of diameter 2 known at that time were unique. However, this is not generally true since Jorgensen recently found  (see \cite{J13}) two non-isomorphic mixed Moore graphs of diameter 2,  out-degree 7, undirected degree 3 and order 108.



\noindent Table \ref{all} depicts all values for $n\leq 200 $ with the corresponding feasible values of $z$ and $r$, such that a mixed Moore graph either exists or is not known to exist. There are still many values of $r$ and $z$ for which the existence of a mixed Moore graph of diameter 2 has not been settled.

In both the undirected and directed graphs most of the work carried out on this topic has focused on constructing (di)-graphs of diameter $k\geq 2$, degree $d\geq 3$ and a number or vertices as close as possible to the respective Moore bounds. Based on this,  we say that a mixed Moore graph is {\it{dense}} if its order is close to the Moore bound and it is  {\it{optimal}}, if there does not exist a mixed graph with the same parameters and bigger order.

In this paper we give a construction of mixed graphs of diameter $2$, undirected degree $q+2t$, directed degree $(q-1)/2-2t$ and $2q^2$ vertices for $q$ being an odd prime power and either $ t \in \{0,\ldots,\frac{q-1}{4}\}$, if $q\equiv 1$ $(mod ~4)$, or $ t \in \{0,\dots, \frac{q-3}{4}\}$, if $q\equiv 3$ $(mod ~4)$. In particular, when $t=\frac{q-1}{4}$ and $q\equiv 1$ $(mod 4)$, ~we reobtain the undirected graphs constructed
by McKay, Miller and \v{S}ir\'a\v{n} in 1998 which are currently the largest known with diameter $2$. For $t=0$ the constructed family of  $(\frac{q-1}{2},q)$-mixed graphs of diameter 2 are dense, since the  Moore bound  for a mixed Moore graph with these parameters is equal to $\frac{9q^2-4q+3}{4}$, so it follows that the defect of these mixed graphs is $({\frac{q-2}{2}})^2-\frac{1}{4}$.

Furthermore, our construction provides for $q=3$, a known $(1,3;2)$-mixed Moore graph of order $18$ called Bos\'ak's graph and a new $(2,3;2)$-mixed graph of order $50$, which is proved to be optimal.  For the rest of the values of $q$ and $t$ our construction produces good lower bounds for the degree/diameter problem.

{\small
\begin{table}[ht!]
\begin{center}
\begin{tabular}{|c|c|c|c|c|c|c|c||c|c|c| }\hline
 n & d & z & r  & existence & uniqueness \\ \hline\hline
3    & 1   & 1  & 0 & $Z_3$   &       YES\\ \hline
5    & 2   & 0  & 2 & $C_5$   &       YES \\ \hline
6    & 2   & 1  & 1 & $Ka(2,2)$       &YES \\ \hline
10   & 3   & 0  & 3 & Petersen graph  &  YES  \\ \hline
12   & 3   & 2  & 1 & $Ka(3,2)$       &  YES \\ \hline
18   & 4   & 1  & 3 & Bos\'ak graph   & YES \\ \hline
20   & 4   & 3  & 1 & $Ka(4,2)$       & YES \\ \hline
30   & 5   & 4  & 1 & $Ka(5,2)$       & YES   \\ \hline
40   & 6   & 3  & 3 & unknown         & unknown   \\ \hline
42   & 6   & 5  & 1 & $Ka(6,2)$       & YES     \\ \hline
50   & 7   & 0  & 7 & Hoffman-Singleton graph  & YES   \\ \hline
54   & 7   & 4  & 3 & unknown   & unknown  \\ \hline
56   & 7   & 6  & 1 & $Ka(7,2)$ & YES    \\ \hline
72   & 8   & 7  & 1 & $Ka(8,2)$ & YES      \\ \hline
84   & 9   & 2  & 7 & unknown    & unknown   \\ \hline
88   & 9   & 6  & 3 & unknown & unknown   \\ \hline
90   & 9   & 8  & 1 & $Ka(9,2)$    & YES      \\ \hline
108  & 10  & 7  & 3 & Jorgensen graph    & NO      \\ \hline
110  & 10  & 9  & 1 & $Ka(10,2)$    & YES      \\ \hline
132  & 11  & 10 & 1 & $Ka(11,2)$    & YES      \\ \hline
150  & 12  & 5  & 7 & unknown   & unknown      \\ \hline
156  & 12  & 11 & 1 & $Ka(12,2)$    & YES      \\ \hline
180  & 13  & 10 & 3 & unknown    & unknown     \\ \hline
182  & 13  & 12 & 1 & $Ka(13,2)$    & YES      \\ \hline

\end{tabular}
\caption{\label{all} Feasible values of the parameters for proper mixed Moore graphs of order up to 200.}
\end{center}
\end{table}
}





\section {Notation and terminology}

\noindent Let $G$ be a mixed graph with vertex set $V(G)$, edge set $E(G)$ and arc set $A(G)$. The distance from a vertex $u$ to a vertex $v$  is the length of a shortest path from $u$ to $v$. The distance from $u$ to $v$ is infinite, if $v$ is not reachable from $u$. Note that in a directed graph the distance from vertex $u$  to vertex $v$ can differ from the distance  from $v$ to $u$. The maximum value $k$ of the distance over all pairs of vertices of $G$ is the diameter of the graph.



In our constructions we use the incidence graph of a partial plane. A {\it partial plane}    is defined   as
   two finite sets  $\mathcal{P}$ and $\mathcal{L}$ called points and lines respectively, where $\mathcal{L}$  consists of subsets of $\mathcal{P}$, such that
 any line is incident with at least two points, and two points are
incident with at most one line.
The  {\it incidence graph} of a partial plane is a bipartite graph with partite sets $\mathcal{P}$
and     $\mathcal{L}$ and a point of $\mathcal{P}$ is adjacent to a line of $\mathcal{L}$ if they are incident. 
  Observe that the incidence graph of a partial plane
is clearly a bipartite graph with even girth $g\ge 6$.


The following definition of a biaffine plane was given in \cite{B67}:
\begin{definition} \label{def}Let $\mathbb{F}_q$ be the finite field of order $q$.
 \begin{itemize}

 \item[(i)]Let $\mathcal{L}=   \mathbb{F}_q  \times  \mathbb{F}_q  $ and  $\mathcal{P}=   \mathbb{F}_q   \times   \mathbb{F}_q  $ denoting the elements of $\mathcal{L}$ and $\mathcal{P}$ using ``brackets" and ``parenthesis", respectively.

\noindent The following set  of $q^2 $ lines define a  biaffine plane:
$$  [m,b] =\{(x, mx+b)  : x\in \mathbb{F}_q\}   \mbox{ for all } m,b\in \mathbb{F}_q.
 $$
\item[(ii)] The incidence graph of the biaffine plane is a bipartite graph $B_q=(\mathcal{P},\mathcal{L})$ which
is $q$-regular, has order $2q^2$,  diameter $4$ and girth $6$,  if $q\ge 3$; and girth $8$, if $q=2$.
\end{itemize}
\end{definition}
\begin{remark} \label{rem}
The diameter of $B_q$ is 4 and the vertices mutually at distance 4 are the vertices of the sets   $L_m=\{[m, b]: b\in \mathbb{F}_q\}$,  and $P_x=\{( x, y): y\in \mathbb{F}_q\}$  for all $x,m\in \mathbb{F}_q$.
\end{remark}

\section{An infinite family of dense mixed graphs}
\subsection{Basic Construction}

Let $q$ be an odd prime power. Let $\mathbb{F}_q$ be the finite field of order $q$ and let $M\subseteq \mathbb{F}_q-0$, $|M |  =(q-1)/2$, be such that for all $u,v\in M$, $u+v\ne 0$. Therefore  $|-M |  =(q-1)/2$, $M\cap M=\emptyset$  and $M\cup (-M)=\mathbb{F}_q-0$.
Let $T\subseteq M$ with $|T|=2t$ be such that  $ t \in \{0,\ldots, \frac{q-1}{4}\}$ if $q\equiv 1$ $(mod ~ 4)$, or $ t \in \{0,\ldots, \frac{q-3}{4}\}$ if $q\equiv 3$ $(mod ~4)$.

\noindent Let $-T=\{t'\in -M : t+t'=0, \mbox{ for } t\in T\}$ and consider the following two sets:
 $$T_1=\{t_i,t_i', i=1,\ldots, t:t_i+t'_i=0\}\subseteq T\cup(-T) \mbox{ and }$$
  $$T_2=\{t_i, t'_i, i=t+1,\ldots, 2t:t_i+t'_i=0\}\subseteq T\cup(-T).$$
 Let $S=M-T$ and $(-S)=(-M)-(-T)$.\\

\begin{construction}\label{oddq}Let $q\ge 3$ be an  odd prime power. Using the aforementioned  defined sets we construct a mixed graph $G_{q,t}$ for any $ t \in \{0,\ldots, \frac{q-1}{4}\}$, if $q\equiv 1$ $(mod~4)$ or $ t \in \{0,\ldots, \frac{q-3}{4}\}$ if $q\equiv 3$ $(mod~ 4)$ as follows: Let $V(G_{q,t})=V(B_q)$, 

\noindent $E(G_{q,t})=E(B_q)\cup \{( [m,b], [m,b+i]): i\in T_1 \}\cup \{ ((x,y), (x,y+j)): j\in T_2 \}$ and 

\noindent $A(G_{q,t})=\{( [m,b], [m,b+i]): i\in S \}\cup \{ ((x,y), (x,y+j)): j\in -S \}$.
 \end{construction}

 In Figure \ref{b11} we exhibit, for $q=7$, the induced mixed subgraphs of $G_{7,t}$ for points and lines when $t=0$ (on the left) and $t=1$ (on the right).

\begin{figure}[h]
\begin{center}
\includegraphics[height =10cm]{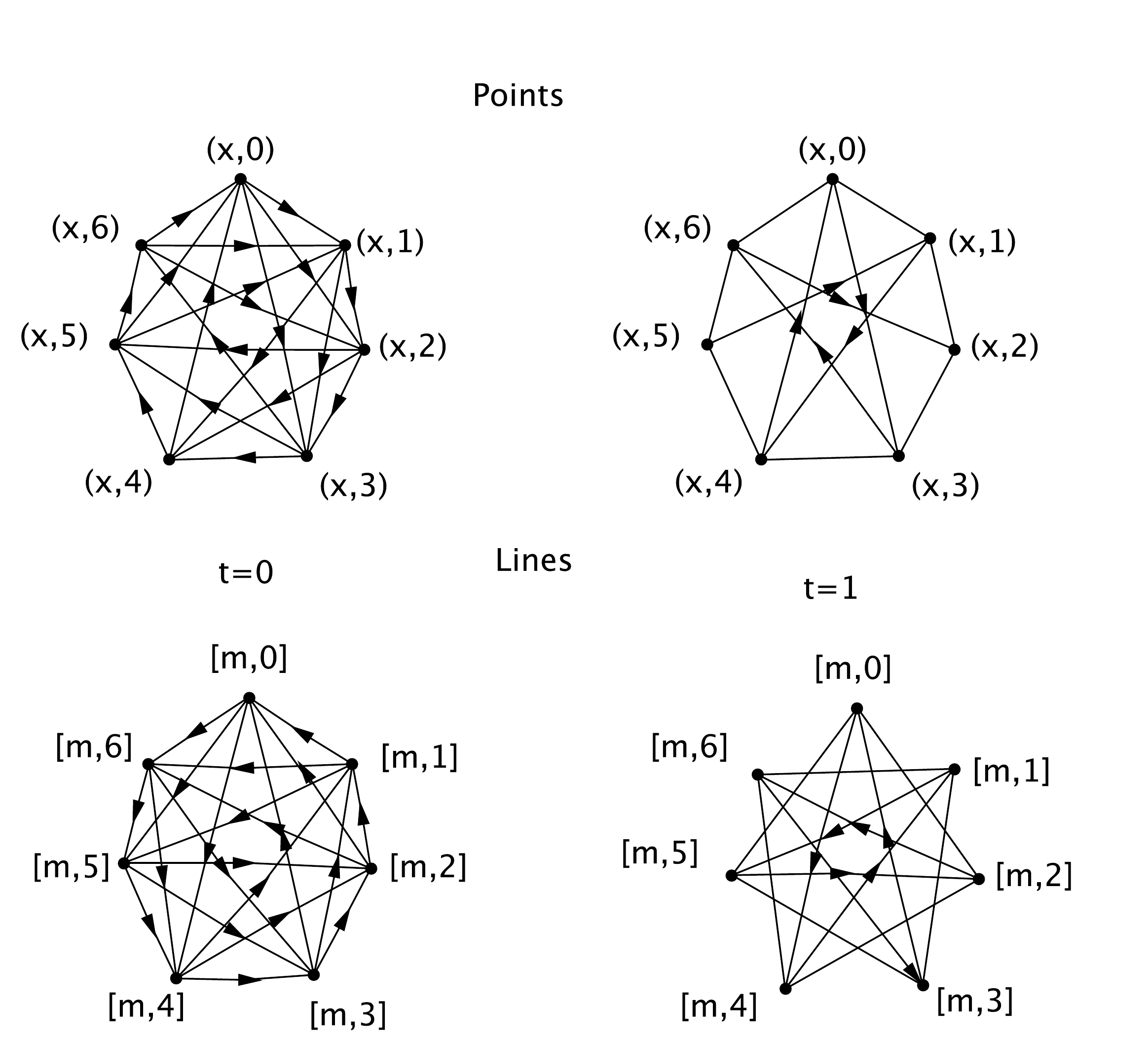}
\end{center}
\caption{The induced mixed subgraphs of $G_{7,t}$ for points and lines for $t=0$ (on the left) and $t=1$ (on the right). }\label{b11}
\end{figure}

 \begin{theorem}\label{cons}
 The mixed graph $G_{q,t}$ defined in Construction \ref{oddq} is a mixed graph of diameter 2 with parameters  $r=q+2t$, $z=(q-1)/2-2t$ and $2q^2$ vertices.
 \end{theorem}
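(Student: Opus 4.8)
The plan is to verify the order and the two degrees by a direct incidence count, and then to establish the diameter by a case analysis that isolates a single additive obstruction.

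First I would record the local data at each vertex. Since $V(G_{q,t})=V(B_q)$, the order is $2q^2$. Each line $[m,b]$ keeps its $q$ incidences with points and gains $|T_1|=2t$ undirected neighbours $[m,b+i]$, $i\in T_1$ (note $T_1=-T_1$, so these are genuine edges); likewise each point $(x,y)$ has $q$ incidences plus $|T_2|=2t$ undirected neighbours, giving undirected degree $r=q+2t$ at every vertex. The arcs out of $[m,b]$ are the $[m,b+i]$ with $i\in S$, and those out of $(x,y)$ are the $(x,y+j)$ with $j\in -S$, so the out-degree is $|S|=|-S|=(q-1)/2-2t=z$ (the same count gives the in-degree, so the arc set is diregular). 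It is convenient to set $L=T_1\cup S$ and $R=T_2\cup(-S)$, which are exactly the forward-difference sets inside a line-class $L_m$ and inside a point-class $P_x$, respectively. A short computation with the definitions of $T_1,T_2,S$ shows $L\cap R=\emptyset$ and $L\cup R=(T\cup(-T))\cup(S\cup(-S))=M\cup(-M)=\mathbb{F}_q\setminus\{0\}$, with $|L|=|R|=(q-1)/2$; thus $L$ and $R$ partition the nonzero field elements into two equal halves.

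Next I would prove that the diameter equals $2$ by splitting ordered pairs $(u,v)$ according to the types of $u$ and $v$. The cross-class pairs are settled by the incidence geometry of $B_q$: two lines of different slope meet in a unique point and two points with different first coordinate lie on a unique common line, each yielding a length-two path of incidence edges. For a non-incident line--point pair I would use the offset $\delta=y-mx-b\neq 0$: if $\delta\in L$ one routes $[m,b]\to[m,b+\delta]\to(x,y)$ through the slope-$m$ line containing $(x,y)$, while if $\delta\in R$ one routes $[m,b]\to(x,mx+b)\to(x,y)$ through the point of $[m,b]$ lying in $P_x$. Since $L\cup R=\mathbb{F}_q\setminus\{0\}$, one of the two always applies, and the point--line direction is handled symmetrically with offset $mx+b-y$.

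The only genuinely hard pairs are the same-class ones: two lines $[m,b],[m,b']$ of equal slope share no point (parallel lines), so every length-two walk from $u$ to $v$ must stay inside $L_m$, and dually two points of equal first coordinate must be joined inside $P_x$. Hence these reduce to the additive statements that every nonzero difference is reachable in at most two forward steps, i.e. $R\subseteq L+L$ (for lines) and $L\subseteq R+R$ (for points). I would deduce both from one lemma: if $\mathbb{F}_q\setminus\{0\}=A\sqcup B$ with $|A|=|B|$, then $B\subseteq A+A$. Indeed, if some $\rho\in B$ were not in $A+A$, then $\rho-a\notin A$ for every $a\in A$, and $\rho-a\neq 0$ since $\rho\notin A$; hence $\rho-A\subseteq B$, and equality of cardinalities forces $\rho-A=B$. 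But then $\rho\in B=\rho-A$ would give $\rho-a=\rho$, i.e. $a=0\notin A$, a contradiction. Applying this with $(A,B)=(L,R)$ and with $(A,B)=(R,L)$ closes the same-class cases. The main obstacle is conceptual rather than computational: one must recognize that the incidence geometry disposes of all cross-class pairs and that the parallel-class pairs collapse to this single, slope-independent sumset condition on the balanced partition $L\sqcup R=\mathbb{F}_q\setminus\{0\}$; once that reduction is in place, the additive lemma is the heart of the argument.
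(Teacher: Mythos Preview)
Your proof is correct and follows essentially the same approach as the paper: the cross-type pairs are handled via the biaffine incidence, the mixed line--point pairs via the dichotomy on the offset $\delta=y-mx-b$, and the same-class pairs by a pigeonhole on the $(q-1)/2$ forward differences. Your packaging of the last step as the additive lemma ``$\mathbb{F}_q\setminus\{0\}=A\sqcup B$ with $|A|=|B|$ forces $B\subseteq A+A$'' is a cleaner, slope-independent formulation of the paper's counting argument on $A_1\cup A_2\cup\{[m,b],[m,b']\}\subseteq V_m$, but the underlying idea is the same.
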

\begin{proof} It is immediate that the parameters of $G_{q,t}$ are   $r=q+t$, $z=(q-1)/2-2t$ since $B_q$ is a $q$-regular graph and we have added to each vertex $(q-1)/2-2t$ outing arcs.
Let us show that the diameter is 2. First, observe that  given any two vertices $[m,b]$ and  $[m,b']$, the set of their adjacent vertices in $B_q$ are by definition $(x,mx+b)$ and $(x,mx+b')$ for all $x\in \mathbb{F}_q$, respectively, and we have four possibilities in $G_{q,t}$:

\begin{itemize}
\item [(i)] If $b'-b\in T_1$ then $[m,b]$ and  $[m,b']$ are adjacent and $(x,mx+b)$ and $(x,mx+b')$ are not adjacent.
\item [(ii)]  If $b'-b\in T_2$ then $[m,b]$ and  $[m,b']$ are not adjacent and $(x,mx+b)$ and $(x,mx+b')$ are adjacent.
\item [(iii)] If $b'-b\in S$ then there exists an arc from $[m,b]$ to $[m,b']$ and an arc to $(x,mx+b)$ from $(x,mx+b')$.
\item [(iv)]  If $b'-b\in (-S)$ then there exists an arc to $[m,b]$ from  $[m,b']$ and an arc from $(x,mx+b)$ to $(x,mx+b')$.
\end{itemize}

Let us check the distance $d_{G_{q,t}}([m,b],(x,y))$  for any pair of vertices $[m,b],(x,y)$ assuming that they are not adjacent because otherwise we are done.
Suppose that $[m,b']$ is adjacent to $(x,y)$ in $B_q$; if $b'-b\in T_1\cup S$, then by (i) and (ii), $[m,b],[m,b'] ,(x,y)$ is a path of length two in ${G_{q,t}}$ and $d_{G_{q,t}}([m,b],(x,y))= 2$. \\
Now, if $b'-b\in T_2\cup (-S)$ since $[m,b']$ is adjacent to $(x,y)$ in $B_q$ we have  $(x,y)=(x,mx+b')$ and also $[m,b]$ is adjacent to $(x,mx+b)$ in $B_q$, then by (ii) and (iv) it follows that $[m,b],(x,mx+b),(x,y)$ is a path of length two in ${G_{q,t}}$ and $d_{G_{q,t}}([m,b],(x,y))= 2$.

 Consequently we conclude that $d_{G_{q,t}}([m,b],(x,y))\leq 2$ for any pair of vertices $\{[m,b],(x,y)\}$.

 Let us check the distance $d_{G_{q,t}}([m,b],[m,b'])$. If $b'-b\in T_1\cup S$ then $d_{G_{q,t}}([m,b],[m,b'])=1$. Therefore we assume that  $s=b'-b\in T_2\cup (-S)$, that is, either $[m,b]$ and $[m,b']$ are not adjacent or there is an arc to $[m,b]$ from  $[m,b']$. Observe that the set $A_1=\{[m,b+s]: s\in T_1\cup (S)\}$ has  $(q-1)/2$ vertices. Moreover the set  $A_2=\{[m,b'-s]: s\in T_2\cup(-S)\}$ has  $(q-1)/2$ vertices. If $A_1\cap A_2=\emptyset$ then the set $V_m=\{[m, b]: b\in \mathbb{F}_q\}=A_1\cup A_2\cup \{[m,b],[m,b']\}$,  implying that $|V_m|=q+1$,  which is a contradiction because $|V_m|=q$. Thus $A_1\cap A_2\not=\emptyset$ yielding that there exists some $s\in S$ such that $[m,b],[m,b'-s],[m,b']$ is a path of length two in ${G_{q,t}}$. Thus in either case $d_{G_{q,t}}([m,b],[m,b'])\le 2$.

Analogously, it is proved that   $d_{G_{q,t}}((x,y),(x,y'))\le 2$. Hence we can conclude that the diameter of $G_{q,t}$ is 2.
\end{proof}

From Theorem \ref{cons}  the following corollaries are immediate.

\begin{corollary}
\label{tzero}
For $q$ being an odd prime power and $t=0$ the graph, called for simplicity $G_{q}$, given in Theorem \ref{cons} is a mixed graph of diameter 2 with parameters  $r=q$, $z=(q-1)/2$ and $2q^2$ vertices.
\end{corollary}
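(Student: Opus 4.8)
The plan is to derive Corollary \ref{tzero} as the specialisation $t=0$ of Theorem \ref{cons}, so the task reduces to checking that this substitution is legitimate and then tracing the definitions through. First I would confirm that $t=0$ is an admissible parameter: it lies in $\{0,\ldots,\frac{q-1}{4}\}$ when $q\equiv 1 \pmod 4$ and in $\{0,\ldots,\frac{q-3}{4}\}$ when $q\equiv 3 \pmod 4$, so Construction \ref{oddq} and Theorem \ref{cons} apply without change.

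Next I would evaluate the auxiliary sets at $t=0$. Since $|T|=2t=0$ we get $T=\emptyset$, hence $-T=\emptyset$, and both $T_1$ and $T_2$ are empty (each is indexed by an empty range of $i$). Consequently $S=M-T=M$ and $-S=(-M)-(-T)=-M$, with $|S|=|-S|=(q-1)/2$. Substituting into Construction \ref{oddq}, the undirected part collapses to $E(G_q)=E(B_q)$, since no edges are added within the line-classes or point-classes, while the arcs become $A(G_q)=\{([m,b],[m,b+i]):i\in M\}\cup\{((x,y),(x,y+j)):j\in -M\}$.

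Finally I would read the parameters off Theorem \ref{cons}: the undirected degree is $r=q+2\cdot 0=q$, the directed out-degree is $z=(q-1)/2-2\cdot 0=(q-1)/2$, and the order is $|V(B_q)|=2q^2$. Diameter $2$ is inherited directly, since the proof of Theorem \ref{cons} is valid for every admissible $t$; in particular the counting argument for same-class pairs remains non-degenerate because $|A_1|+|A_2|+2=\frac{q-1}{2}+\frac{q-1}{2}+2=q+1>q$ independently of $t$. There is essentially no obstacle in this proof: the only point needing (routine) verification is that the vanishing of $T$, $T_1$ and $T_2$ does not disrupt any step of the construction or of the diameter argument, which a quick inspection confirms.
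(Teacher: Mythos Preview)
Your proposal is correct and matches the paper's own treatment: the corollary is stated there as immediate from Theorem~\ref{cons}, with the paper merely noting that for $t=0$ one has $S=M$, $-S=-M$, $E(G_q)=E(B_q)$ and $A(G_q)=\{([m,b],[m,b+i]):i\in M\}\cup\{((x,y),(x,y+j)):j\in -M\}$. Your explicit check that $t=0$ is admissible and that $T,T_1,T_2$ vanish is exactly the right specialisation, and your additional remark on the non-degeneracy of the $|A_1|+|A_2|+2>q$ count is accurate though not needed.
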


In this case $S=M$ and $(-S)=(-M)$; and $E(G_q)=E(B_q)$, $A(G_q)=\{( [m,b], [m,b+i]): i\in M \}\cup \{ ((x,y), (x,y+j)): j\in -M \}$.
Figure \ref{G5} depicts $G_5$.

\begin{figure}[h]
\begin{center}
\includegraphics[height =9cm]{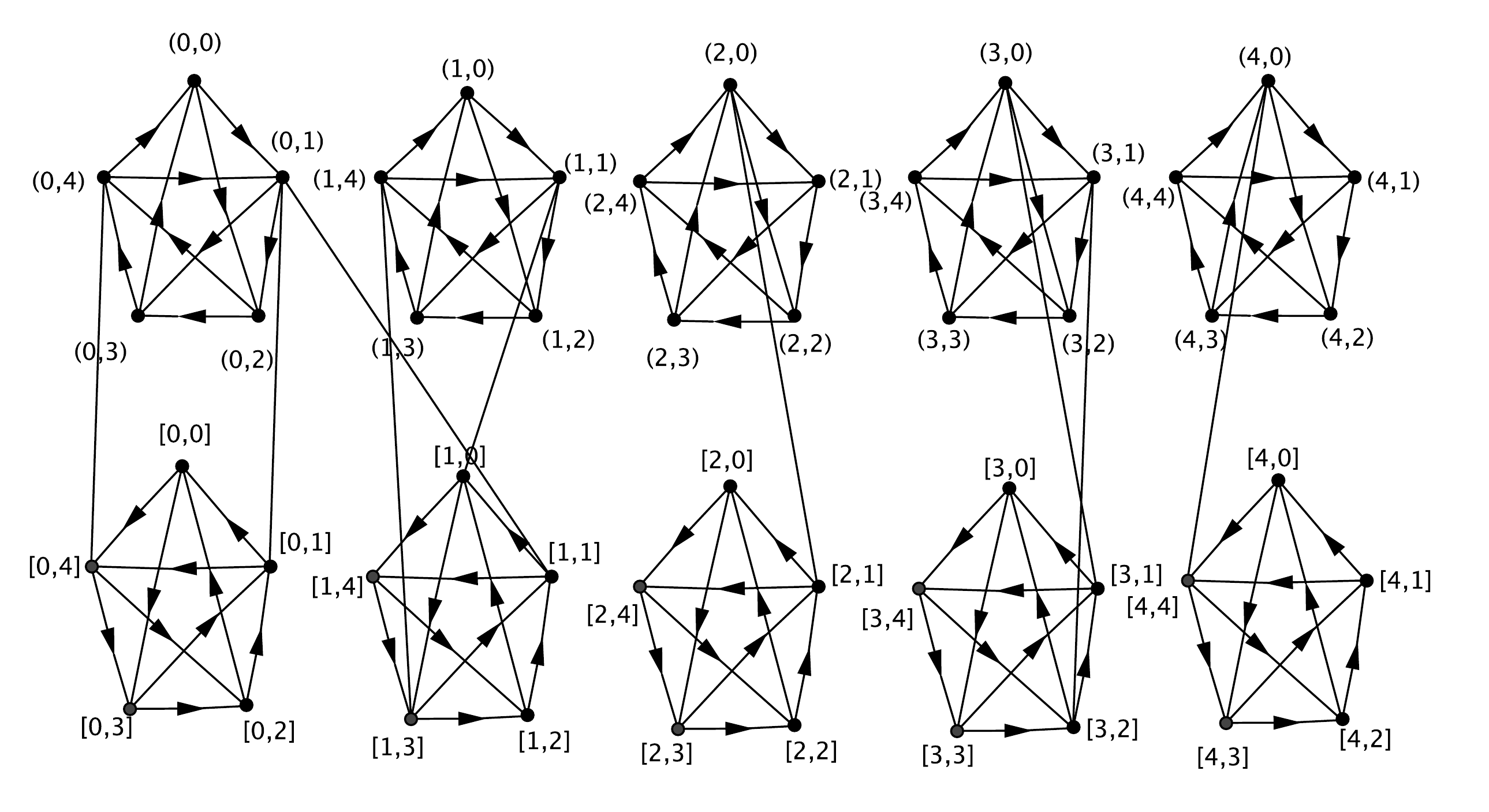}
\end{center}
\caption{graph $G_5$ }\label{G5}
\end{figure}

\begin{corollary}
\label{zzero}
 Let $q\equiv 1\ \ (mod ~4)$ be an odd prime power and $t=\frac{q-1}{4}$. Then the mixed graph $G_{q,\frac{q-1}{4}}$ given in Theorem \ref{cons}  has diameter 2 and parameters  $r=\frac{3q-1}{2}$, $z=0$ and $2q^2$ vertices.
\end{corollary}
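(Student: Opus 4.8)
The plan is to obtain this as a direct specialization of Theorem \ref{cons}, substituting $t=\frac{q-1}{4}$; no fresh distance argument is required, since diameter $2$ is inherited verbatim. The one preliminary point I would verify is that $t=\frac{q-1}{4}$ is an admissible parameter for Construction \ref{oddq}. The hypothesis $q\equiv 1\pmod 4$ is exactly what makes $\frac{q-1}{4}$ a nonnegative integer, and this value is precisely the right-hand endpoint of the admissible range $\{0,\ldots,\frac{q-1}{4}\}$ prescribed for the case $q\equiv 1\pmod 4$. Consequently the auxiliary sets $T,T_1,T_2,S,-S$ are all well defined, $G_{q,\frac{q-1}{4}}$ is a legitimate member of the family, and Theorem \ref{cons} applies.

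Granting admissibility, Theorem \ref{cons} immediately gives diameter $2$, order $2q^2$, undirected degree $r=q+2t$, and directed degree $z=\frac{q-1}{2}-2t$. It then remains only to insert $t=\frac{q-1}{4}$ and simplify:
\[
r = q+2\cdot\frac{q-1}{4} = q+\frac{q-1}{2} = \frac{3q-1}{2}, \qquad z = \frac{q-1}{2}-2\cdot\frac{q-1}{4} = 0,
\]
which are exactly the asserted parameters.

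There is no real obstacle here; the proof is essentially a one-line substitution. The only feature worth recording is the degenerate nature of the endpoint $z=0$: since $|T|=2t=\frac{q-1}{2}=|M|$ forces $T=M$, the set $S=M-T$ is empty, so the arc set vanishes and $G_{q,\frac{q-1}{4}}$ is in fact a genuinely undirected graph. This is the boundary case flagged in the introduction, where the construction recovers the undirected diameter-$2$ graphs of McKay, Miller and \v{S}ir\'a\v{n}; one may optionally note this identification, though it plays no role in establishing the stated parameters.
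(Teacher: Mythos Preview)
Your proposal is correct and matches the paper's approach: the paper presents this corollary as an immediate consequence of Theorem~\ref{cons} with no separate argument, and your substitution $t=\frac{q-1}{4}$ into the formulas $r=q+2t$ and $z=\frac{q-1}{2}-2t$ is exactly what is intended. Your additional remarks on admissibility and on the degeneracy $S=\emptyset$ are accurate but go slightly beyond what the paper records.
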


The graphs $G_{q,\frac{q-1}{4}}$  have already been given by various authors using different techniques. The first construction was made by McKay, Miller and \v{S}ir\'a\v{n} in \cite{MS13} using lifts and voltage graph. Afterwards, it was constructed by Hafner \cite{H04} and by Araujo, Noy and Serra in \cite{ANS06} using geometrical techniques. It is important to take into consideration that this family of graphs $G_{q,\frac{q-1}{4}}$ is the largest known until now related to the Moore bound for given parameters and diameter $2$ (see \cite{MS13} or \cite{table}). Moreover, the graphs constructed by these authors are vertex transitive, in particular the graph $G_{q,\frac{q-1}{4}}$ for any $q\equiv 1 (mod~ 4)$ odd prime power is vertex transitive. Furthermore, in \cite{BMSZ13} there is a complete discussion devoted to  symmetries and automorphism groups of these graph and other related issues.

\section {Vertex transitivity of $G_q$}
In this section we prove that graph $G_q:=G_{q,0}$ of Corollary \ref{tzero} is vertex transitive. In addition, we provide a short remark explaining why $G_{q,t}$ is not vertex transitive for $1\leq t \leq \frac{q-1}{4}-1$ with $q\equiv1$ $(mod ~4)$ and  for $1\leq t \leq \frac{q-3}{4}$ with $q\equiv 3$ $(mod ~4)$. Consequently, $G_{q,t}$ is vertex transitive only if $t=0$ and $t=\frac{q-1}{4}$ and $q\equiv 1$  $(mod ~4)$ is an odd prime power.

\begin{theorem}\label{Gqtransitive}
Let $q$ be an odd prime power then $ G_q$ is a vertex transitive mixed graph.
 \end{theorem}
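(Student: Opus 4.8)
The plan is to exhibit a transitive group of automorphisms of $G_q$ acting on the $2q^2$ vertices. Recall that $V(G_q)=V(B_q)=\mathcal{P}\cup\mathcal{L}$ with points $(x,y)$ and lines $[m,b]$, and that in $G_q$ (the case $t=0$) all the undirected edges come from $B_q$, while the added arcs go $[m,b]\to[m,b+i]$ for $i\in M$ and $(x,y)\to(x,y+j)$ for $j\in -M$. The natural first step is to write down two families of maps and check they are automorphisms. First I would consider the ``vertical translations'' $\tau_c$ for $c\in\mathbb{F}_q$ defined by $(x,y)\mapsto(x,y+c)$ and $[m,b]\mapsto[m,b+c]$: these fix the point-classes $P_x$ and line-classes $L_m$ setwise, preserve incidence in $B_q$ (since $(x,y)$ lies on $[m,b]$ iff $y=mx+b$, and this is invariant under adding $c$ to both $y$ and $b$), and clearly preserve the arc-differences $i,j$, so they are automorphisms of $G_q$. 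These already act transitively within each class $P_x$ and $L_m$.

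Next I would use the affine structure of the biaffine plane to move between different classes. The key maps are the ``shears/affine substitutions'' coming from the action $m\mapsto m+s$ on slopes, together with maps permuting the $x$-coordinate. Concretely I would check that maps such as $(x,y)\mapsto(x,y+sx)$, $[m,b]\mapsto[m+s,b]$ for $s\in\mathbb{F}_q$, and the ``scaling'' maps $(x,y)\mapsto(ax,ay)$, $[m,b]\mapsto[a^{-1}m,ab]$ (or an appropriate variant) for $a\in\mathbb{F}_q^\ast$, preserve the incidence relation $y=mx+b$; one verifies this by direct substitution. The effect is to let me move any line-class $L_m$ to any other $L_{m'}$ and any point-class $P_x$ to any other $P_{x'}$. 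Composing with the vertical translations $\tau_c$, the group generated acts transitively on all of $\mathcal{P}$ and on all of $\mathcal{L}$ separately.

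The essential remaining step—and the one I expect to be the main obstacle—is to connect the two sides, i.e.\ to produce an automorphism sending some point to some line, so that transitivity on $\mathcal{P}$ and on $\mathcal{L}$ combine into transitivity on all of $V(G_q)$. This is exactly where the arithmetic condition $M\cup(-M)=\mathbb{F}_q-0$, $M\cap(-M)=\emptyset$ is needed, because any point-line swap must exchange the arc-rule $i\in M$ on lines with the arc-rule $j\in -M$ on points, and must respect arc \emph{orientation}. The natural candidate is a polarity-type involution $(x,y)\leftrightarrow[x,y]$ (or $[m,b]\leftrightarrow(-m,b)$, chosen to match the sign so that an out-arc with difference in $M$ maps to an out-arc with difference in $-M$); I would verify that such a map sends the incidence graph $B_q$ to itself (using that $(x,y)$ on $[m,b]$, i.e.\ $y=mx+b$, is symmetric enough under the chosen swap to land on an incidence or on a distance-$2$ relation preserved by the structure) and, crucially, that it carries arcs to arcs with the correct direction precisely because $-(M)=-M$ is the point-side arc set. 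Once this swap is shown to be an automorphism, combining it with the class-transitive subgroup from the first two paragraphs yields a group acting transitively on $V(G_q)$, establishing that $G_q$ is vertex transitive.
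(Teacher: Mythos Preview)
Your overall architecture---transitive action on $\mathcal{P}$, transitive action on $\mathcal{L}$, then a point/line swap---is exactly the paper's strategy, and your vertical translations $\tau_c$ and shears $(x,y)\mapsto(x,y+sx)$, $[m,b]\mapsto[m+s,b]$ are genuine automorphisms of $G_q$ (the latter giving transitivity on the classes $L_m$). However, two of your concrete candidates fail, and in a way that is not merely cosmetic. First, the scalings: whatever variant of $(x,y)\mapsto(ax,ay)$, $[m,b]\mapsto[a^{-1}m,ab]$ you pick so that incidence is preserved, the second coordinate of lines (and of points) gets multiplied by $a$, so the arc $[m,b]\to[m,b+i]$ is sent to one with difference $ai$. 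Since $M\subset\mathbb{F}_q\setminus\{0\}$ is an arbitrary set of representatives modulo $\pm1$, in general $aM\ne M$, and the map is \emph{not} an automorphism of $G_q$. Even setting this aside, scalings fix $P_0$, so they cannot by themselves give transitivity on the point-classes $P_x$. Second, neither of your polarity candidates preserves incidence: for $(x,y)\leftrightarrow[x,y]$ one would need $y=mx+b\Leftrightarrow b=xm+y$, which fails; for $[m,b]\leftrightarrow(-m,b)$ one would need $y=mx+b\Leftrightarrow b=mx+y$, which also fails.

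The paper repairs both issues at once. The swap is $\theta:[m,b]\leftrightarrow(-m,-b)$ (negate \emph{both} coordinates): then $y=mx+b$ becomes $-b=(-x)(-m)+(-y)$, which is the same relation, and on arcs the difference $i\in M$ is sent to $-i\in -M$, matching the point-side arc rule. For transitivity within each side the paper uses the family $\Psi_{a,t}:(x,y)\mapsto(-x+a,\,y+t)$, $[m,b]\mapsto[-m,\,b+am+t]$; the key feature is that the second coordinate is only \emph{translated}, never scaled, so the arc-difference sets $M$ and $-M$ are preserved exactly, while the first coordinate undergoes $x\mapsto -x+a$ (resp.\ $m\mapsto -m$), which is already transitive on $\mathbb{F}_q$. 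Replacing your scalings by these affine maps (or, equivalently, by the $x$-translations $(x,y)\mapsto(x+a,y)$, $[m,b]\mapsto[m,b-ma]$) and your polarity by $\theta$ makes the argument go through.
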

\begin{proof}
Note that the function $\theta$ that interchanges the line $[m,b]$ at point $(-m,-b)$, and  similarly point $(x,y)$ at line $[-x,-y]$, is an automorphism of $B_q$ which exchanges stable sets of the graph.

We will prove that $\theta$ is  an automorphism of $G_q$.  Let $v\in V(G_q)$, we will prove that $N^+(\theta(v))=\theta(N^+(v))$ where $N^+(v)$ denotes the exneighborhood of $v$ (the set of vertices that receives an arc from $v$).
Let $[m,b]\in \mathcal{L}$ be a line of $G_q$. Since $N^+([m,b])=\{[m,b+i]: i\in M\}$ it follows that
$$\theta(N^+([m,b]))=\{(-m,-b-i): i\in M\}=\{(-m,-b+j): j\in -M\}=N^+(\theta[m,b]).$$
A similar argument is used to prove that $\theta(N^+((x,y)))= N^+(\theta(x,y))$ for $(x,y)\in \mathcal{P}$ a point of $G_q$.

Let us define the function $$\Psi_{(a,t)}:V(G_q)\longrightarrow V(G_q)\ \mbox{such that}  $$
$$\Psi_{a,t}([m,b])=[-m,b+am+t]$$
$$\Psi_{a,t}((x,y))=(-x+a,y+t)$$
 Next, we prove that $\Psi_{a,t}$ is an automorphism of $G_q$.
First, we prove that:
$$N(\Psi_{a,t}[m,b])\cup N^+(\Psi_{a,t}[m,b]) =\Psi_{a,t}(N([m,b])\cup N^+([m,b])).$$
Note that $$N([m,b])\cup N^+([m,b])=\{(x,mx+b) : x\in \mathbb{F}_q\} \cup \{[m,b+i]:\  i\in M\}.$$
Then
$$\begin{array}{lll} N(\Psi_{a,t}[m,b])\cup N^+(\Psi_{a,t}[m,b])&=&
N([-m,b+am+t])\cup N^+([-m,b+am+t])\\ &=& \{(x',-mx'+b+am+t): x'\in \mathbb{F}_q\} \\&& \cup ~~ \{[-m,b+am+t+i]: i\in M\}.\end{array}$$
Note that if $x'=-x+a$, then $-m(-x+a)+b+am+t=mx+b+t$ and
{\small
$$\begin{array}{ll} N(\Psi_{a,t}[m,b])\cup N^+(\Psi_{a,t}[m,b])&=\{(-x+a,mx+b+t) : x\in \mathbb{F}_q\} \cup \{[-m,b+am+t+i]: i\in M\}\\ &= \Psi_{a,t}(N([m,b])\cup(N^+([m,b])).\end{array}$$
}
Let us  also prove that:
$$N(\Psi_{a,t}(x,y))\cup N^+(\Psi_{a,t}(x,y)) =\Psi_{a,t}(N(x,y)\cup(N^+(x,y)).$$
We have
 $N(x,y)\cup N^+(x,y)=\{[m,y-my] : m\in \mathbb{F}_q\} \cup \{(x,y+i): i\in -M\}$. Then
$$N(\Psi_{a,t}(x,y))\cup N^+(\Psi_{a,t}(x,y))=N(-x+a,y+t) \cup N^+(-x+a,y+t)$$
$$=\{[m',y+t-m'(-x+a)] : m'\in \mathbb{F}_q\} \cup \{(-x+a,y+t+i): i\in -M\}.$$
Note that if $m'=-m$, then
$$N(\Psi_{a,t}(x,y)\cup N^+(\Psi_{a,t}(x,y))=\{[-m,y+t-mx+ma)] : m\in \mathbb{F}_q\} \cup \{(-x+a,y+t+i): i\in -M\}$$
$$=\{[-m,b+am+t] : m \in \mathbb{F}_q\} \cup \{(-x+a,y+t+i): i\in -M\}=\Psi_{a,t}(N(x,y)\cup N^+(x,y)).$$

Now, if we compose $\theta$ with $\Psi_{a,t}$ we obtain an automorphism which exchanges any pair of elements of $G_q$, consequently, $G_q$ is vertex-transitive.  Indeed, let $[m,t]$ and $(x,y)$ be two vertices of $G_q$. We have  $\Psi_{x-m,y+b}\circ \theta ([m,t])=(x,y)$. Thus,   there  always  exists a composition that sends any element to another.
\end{proof}

\begin{remark}
The graph $G_{q,t}$ for any $ t \in \{1,\ldots, \frac{q-1}{4}-1\}$, if $q\equiv 1$ $(mod ~4)$ or $ t \in \{1,\ldots, \frac{q-3}{4}\}$ if $q\equiv 3$ $(mod ~4)$ is not vertex transitive.
\end{remark}
If $G_{q,t}$ is vertex transitive, then the automorphism (or composition of automorphisms) should exchange stable sets (points and lines). Moreover, it should send sets of vertices $L_m$ for $m\in\mathbb{F}_q$ to sets of vertices $P_x$ for $x\in\mathbb{F}_q$. However, it is not difficult to observe that by definition of $G_{q,t}$, the automorphisms  preserving adjacencies between any pair of sets $L_m$ and $P_x$ do not preserve arcs.


\section{Optimal Mixed  Graphs}

In this section we provide some results related to the family constructed in Corollary \ref{tzero}. The following lemma is used in what follows.

\begin{lemma}\label{parity} Let $r\geq 1$ be an odd integer. Then there is not a $(z,r;k)$-mixed graph of odd order.
\end{lemma}
\begin{proof}
 Suppose that there is a $(z,r;k)$-mixed graph of odd order with $z,r\geq 1$ and $r$ odd. Deleting the directions of the arcs we obtain a regular graph of odd degree $2z+r$ and odd order, which is a contradiction.
\end{proof}

\begin{remark}
We construct a family of dense $(\frac{q-1}{2},q)$-mixed graphs of diameter 2. Since the  Moore bound  for a mixed Moore graph with these parameters is equal to $\frac{9q^2-4q+3}{4}$ the defect of these mixed graphs is $({\frac{q-2}{2}})^2-\frac{1}{4}$.
\end{remark}

\begin{theorem} For $q=3$, $G_3$ is a mixed Moore graph and for $q=5$, $G_5$ is an optimal  mixed graph.
\end{theorem}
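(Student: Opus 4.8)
The plan is to treat the two cases separately, each time comparing the order $2q^2$ of $G_q$ against the mixed Moore bound $M_{z,r,2}=(r+z)^2+z+1$ recorded in (\ref{upper}).

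For $q=3$, Corollary \ref{tzero} assigns $G_3$ the parameters $r=3$, $z=1$, diameter $2$, and order $2\cdot 3^2=18$. Substituting $r=3$, $z=1$ into the bound gives $M_{1,3,2}=(3+1)^2+1+1=18$. Since $G_3$ has diameter $2$ and its order equals this bound, it attains $M_{1,3,2}$ and is therefore a mixed Moore graph by definition; it is the Bos\'ak graph appearing in Table \ref{all}. This case is immediate.

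For $q=5$, Corollary \ref{tzero} gives $G_5$ the parameters $r=5$, $z=2$, diameter $2$, and order $50$, while the corresponding bound is $M_{2,5,2}=(5+2)^2+2+1=52$. To establish optimality I must show that no $(2,5;2)$-mixed graph has order exceeding $50$, so it suffices to exclude the two remaining candidates, orders $51$ and $52$. Order $51$ is odd; since $r=5$ is odd, Lemma \ref{parity} forbids any $(z,5;k)$-mixed graph of odd order, which rules out $51$. A graph of order $52$ would attain $M_{2,5,2}$ and hence be a mixed Moore graph, so it would have to satisfy Bos\'ak's arithmetic condition (\ref{c}), namely $r=\frac{1}{4}(c^2+3)$ for some positive odd integer $c$. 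For $r=5$ this forces $c^2=17$, which has no integer solution, so no such mixed Moore graph exists and order $52$ is also excluded. Consequently $50$ is the largest attainable order for these parameters, and $G_5$ is optimal.

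I do not expect a genuine obstacle here: the whole argument reduces to a direct comparison with the Moore bound (\ref{upper}), combined with the parity constraint of Lemma \ref{parity} and the representability condition (\ref{c}). The only point requiring care is to confirm that the orders strictly between $50$ and the bound $52$ are exactly $51$ and $52$, and that each of these is killed by precisely one of the two available obstructions (parity for $51$, non-existence of the corresponding mixed Moore graph for $52$).
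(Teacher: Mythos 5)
Your proposal is correct and follows essentially the same route as the paper: for $q=5$ it rules out order $52$ via Bos\'ak's condition (\ref{c}) (since $r=5$ forces $c^2=17$) and order $51$ via Lemma \ref{parity}, exactly as the paper does. The only cosmetic difference is in the $q=3$ case, where you verify directly that $18$ equals the Moore bound $(3+1)^2+1+1$, whereas the paper additionally invokes the uniqueness of Bos\'ak's graph to identify $G_3$ with it; your version suffices for the stated claim.
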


\begin{proof} For $q=3$ it turns out that $G_3$ has 18 vertices and parameters $r=3$ and $z=1$. Since Bosak's graph is unique, see \cite{MMG07}, we obtain that $G_3$ given in Construction \ref{oddq} is isomorphic to Bosak's graph.

For $q=5$ it turns out that $G_5$ has 50 vertices and parameters $r=5$ and $z=2$. By (\ref{upper}) the upper bound on the number of vertices for this particular case is $52$. Let us show that a mixed Moore graph with $52$ vertices  and parameters $r=5$ and $z=2$ cannot exist.   Otherwise, by (\ref{c}) an odd integer $c$ dividing $(4z-3)(4z+5)=65$ exists, such that $r=5=\frac{1}{4}(c^2+3)$.
But then  $c=\sqrt{17}$ which implies that $c$ is not an integer.   Therefore   the upper bound on the number of vertices  must be at most 51. 
However, from Lemma \ref{parity} it follows that   there is no graph of order 51, and we conclude that the upper bound is 50, yielding that $G_5$  is an optimal  mixed graph.
\end{proof}

 In Figure \ref{G5} we show the optimal $(2,5;2)$-mixed graph.

To conclude the paper we pose a problem which could be studied in the future.

{\bf Problem:}

For $q=7$, $r=7$ and $z=3$  the Moore Bound is equal to $104$. Our construction provides a $(3,7;2)$-mixed graph on $98$ vertices.
By Bosak's condition we know that the Moore bound is not attainable, thus we need to know whether there is or there is not a $(3,7;2)$-mixed graph on either $100$ or $102$ vertices.\\
Note that for these parameters, by Lemma \ref{parity}, graphs with odd order are not possible.





\subsection*{Acknowledgment}
{ Research   supported by the Ministerio de Educaci\'on y Ciencia,
Spain, and the European Regional Development Fund (ERDF) under
project MTM2014-60127-P;  CONACyT-M\'exico under projects 57371,166306, and PAPIIT-M\'exico under project 104609-3}. The last author acknowledgment support by the Slovak Research Grant: the Vega Research Grant 1/0065/13.


\end{document}